\def\str#1{\mathbf {#1}}
\def\Aut{\mathop{\mathrm{Aut}}\nolimits}
\def\Alt{\mathop{\mathrm{Alt}}\nolimits}
\newcommand{\overbar}[1]{\mkern 1.5mu\overline{\mkern-1.5mu#1\mkern-1.5mu}\mkern 1.5mu}
\theoremstyle{plain}
\newtheorem{theorem}{Theorem}[section]
\newtheorem{lemma}[theorem]{Lemma} 
\newtheorem{question}[theorem]{Question}
\theoremstyle{definition}
\newtheorem*{remark*}{Remark}
\newtheorem*{claim*}{Claim}
\theoremstyle{remark}
\begin{document}
\title{Ramsey expansions of 3-hypertournaments}

\authors{
\author[G. Cherlin]{Gregory Cherlin}
\address{Department of Mathematics, Rutgers University,
110 Frelinghuysen Rd., Piscataway, NJ 08854 USA}
\email{cherlin.math@gmail.com}
\author[J. Hubi\v cka]{Jan Hubi\v cka}
\address{Charles University, Faculty of Mathematics and Physics\\Department of Applied Mathematics (KAM)\\
Prague, Czech Republic}
\email{hubicka@kam.mff.cuni.cz}
\author[M. Kone\v cn\'y]{Mat\v ej Kone\v cn\'y}
\address{Charles University, Faculty of Mathematics and Physics\\Department of Applied Mathematics (KAM)\\
Prague, Czech Republic}
\email{matej@kam.mff.cuni.cz}
\author[J. Ne\v set\v ril]{Jaroslav Ne\v set\v ril}
\address{Charles University, Faculty of Mathematics and Physics\\
Computer Science Institute of Charles University (IUUK)\\
Prague, Czech Republic}
\email{nesetril@iuuk.mff.cuni.cz}

\thanks{This paper is part of a project that has received funding from the European Research Council (ERC) under the European Union’s Horizon 2020 research and innovation programme (grant agreement No 810115). Jan Hubi\v cka and Mat\v ej Kone\v cn\'y are further supported by project 21-10775S of the Czech Science Foundation (GA\v CR), Jan Hubi\v cka is also supported by Center for Foundations of Modern Computer Science (Charles University project UNCE/SCI/004) and Mat\v ej Kone\v cn\'y is also supported by the Charles University Grant Agency (GA UK), project 378119.}
}

          % typeset the title of the contribution

\begin{abstract}
We study Ramsey expansions of certain homogeneous 3-hypertournaments. We show that they exhibit an interesting behaviour and, in one case, they seem not to submit to current gold-standard methods for obtaining Ramsey expansions. This makes these examples very interesting from the point of view of structural Ramsey theory as there is a large demand for novel examples.

\keywords{homogeneous hypertournaments, Ramsey property}
\end{abstract}

\maketitle

Structural Ramsey theory studies which homogeneous structures have the so-called Ramsey property, or at least are not far from it (can be \emph{expanded} by some relations to obtain a structure with the Ramsey property). Recently, the area has stabilised with general methods and conditions from which almost all known Ramsey structures follow. In particular, the homogeneous structures offered by the classification programme are well-understood in most cases. Hence, there is a demand for new structures with interesting properties.

In this abstract we investigate Ramsey expansions of four homogeneous 4-constrai\-ned 3-hypertournaments identified by the first author~\cite{Cherlinhypertournaments} and show that they exhibit an interesting range of behaviours. In particular, for one of them the current techniques and methods cannot be directly applied. There is a big demand for such examples in the area, in part because they show the limitations of present techniques, in part because they might lead to a negative answer to the question whether every structure homogeneous in a finite relational language has a Ramsey expansion in a finite relational language, one of the central questions of the area asked in 2011 by Bodirsky, Pinsker and Tsankov~\cite{Bodirsky2011a}.

\section{Preliminaries}
We adopt the standard notions of languages (in this abstract they will be relational only), structures and embeddings. A structure is \emph{homogeneous} if every isomorphism between finite substructures extends to an automorphism. There is a correspondence between homogeneous structures and so-called \emph{(strong) amalgamation classes} of finite structures, see e.g.~\cite{Hodges1993}. A structure $\str A$ is \emph{irreducible} if every pair of vertices is part of a tuple in some relation of $\str A$.

In this abstract, an \emph{$n$-hypertournament} is a structure $\str A$ in a language with a single $n$-ary relation $R$ such that for every set $S\subseteq A$ with $|S| = n$ it holds that the automorphism group of the substructure induced on $S$ by $\str A$ is precisely $\Alt(S)$, the alternating group on $S$. This in particular means that exactly half of $n$-tuples of elements of $S$ with no repeated occurrences are in $R^\str A$. For $n=2$ we get standard tournaments, for $n=3$ this correspond to picking one of the two possible cyclic orientations on every triple of vertices. It should be noted however, that another widespread usage, going back at least to Assous~\cite{Assoushypertournaments}, requires a unique instance of the relation to hold on each $n$-set. A \emph{holey $n$-hypertournament} is a structure $\str A$ with a single $n$-ary relation $R$ such that all irreducible substructures of $\str A$ are $n$-hypertournaments. A \emph{hole} in $\str A$ is a set of 3 vertices on which there are no relations at all.

Let $\str A,\str B, \str C$ be structures. We write $\str C\longrightarrow (\str B)^\str A_2$ to denote the statement that for every 2-colouring of embeddings of $\str A$ to $\str C$, there is an embedding of $\str B$ to $\str C$ on which all embeddings of $\str A$ have the same colour. A class $\mathcal C$ of finite structures has the \emph{Ramsey property} (\emph{is Ramsey}) if for every $\str A,\str B\in \mathcal C$ there is $\str C\in \mathcal C$ with $\str C\longrightarrow (\str B)^\str A_2$ and $\mathcal C^+$ is a \emph{Ramsey expansion} of $\mathcal C$ if it is Ramsey and can be obtained from $\mathcal C$ by adding some relations. By an observation of Ne\v set\v ril~\cite{Nevsetril2005}, every Ramsey class is an amalgamation class under some mild assumptions.

\subsection{Homogeneous 4-constrained 3-hypertournaments}
Suppose that $\str T = (T,R)$ is a 3-hypertournament and pick an arbitrary linear order $\leq$ on $T$. One can define a 3-uniform hypergraph $\hat{\str{T}}$ on the set $T$ such that $\{a,b,c\}$ with $a\leq b\leq c$ is a hyperedge of $\hat{\str T}$ if and only if $(a,b,c)\in R$. (Note that by the definition of a 3-hypertournament, it always holds that exactly one of $(a,b,c)$ and $(a,c,b)$ is in $R$.) This operation has an inverse and hence, after fixing a linear order, we can work with 3-uniform hypergraphs instead of 3-hypertournaments. There are three isomorphism types of 3-hypertournaments on 4 vertices:
\begin{description}
    \item[$\str H_4$] The homogeneous 3-hypertournament on 4 vertices. For an arbitrary linear order $\leq$ on $H_4$, $\hat{\str H_4}$ contains exactly two hyperedges. Moreover, they intersect in vertices $a<b$ such that there is exactly one $c\in H_4$ with $a < c < b$.
    \item[$\str O_4$] The odd 3-hypertournament on 4 vertices. For an arbitrary linear order $\leq$, $\hat{\str O_4}$ will contain an odd number of hyperedges. Conversely, any ordered 3-uniform hypergraph on 4 vertices with an odd number of hyperedges will give rise to $\str O_4$.
    \item[$\str C_4$] The cyclic 3-hypertournament on 4 vertices. There is a linear order $\leq$ on $C_4$ such that $\hat{\str C_4}$ has all four hyperedges. In other linear orders, $\hat{\str C_4}$ might have no hyperedges or exactly two which do not intersect as in $\str H_4$.
\end{description}

We say that a class $\mathcal C$ of finite 3-hypertournaments is \emph{4-constrained} if there is a non-empty subset $S\subseteq \{\str H_4, \str O_4, \str C_4\}$ such that $\mathcal C$ contains precisely those finite 3-hypertournaments whose every substructure on four distinct vertices is isomorphic to a member of $S$. There are four 4-constrained classes of finite 3-hypertournaments which form a strong amalgamation class~\cite{Cherlinhypertournaments}. They correspond to the following sets $S$:

\begin{description}
    \item[$S=\{\str C_4\}$] The \emph{cyclic} ones. These can be obtained by taking a finite cyclic order and orienting all triples according to it. Equivalently, they admit a linear order such that the corresponding hypergraph is complete.

    \item[$S=\{\str C_4,\str H_4\}$] The \emph{even} ones. The corresponding hypergraphs satisfy the property that on every four vertices there are an even number of hyperedges.

    \item[$S=\{\str C_4,\str O_4\}$] The \emph{$\str H_4$-free} ones. Note that in some sense, this generalizes the class of finite linear orders: As $\Aut(\str H_4) = \Alt(4)$, one can define $\str H_n$ to be the $(n-1)$-hypertournament on $n$ points such that $\Aut(\str H_n)=\Alt(n)$. For $n=3$, we get that $\str H_3$ is the oriented cycle on 3 vertices and the class of all finite linear orders contains precisely those tournaments which omit $\str H_3$.

    \item[$S=\{\str C_4,\str O_4,\str H_4\}$] The class of all finite 3-hypertournaments. 
\end{description}

\section{Positive Ramsey results}
In this section we give Ramsey expansions for all above classes with the exception of the $\str H_4$-free ones. Let $\mathcal C_{c}$ be the class of all finite cyclic 3-hypertournaments. Let $\overrightarrow{\mathcal C_{c}}$ be a class of finite linearly ordered 3-hypertournaments such that $(A,R,\leq) \in \mathcal C_{c}$ if and only if for every $x < y < z \in A$ we have $(x,y,z)\in R$. Notice that for every $(A,R)\in \mathcal C$ there are precisely $|A|$ orders $\leq$ such that $(A,R,\leq)\in\overrightarrow{\mathcal C_{c}}$ (after fixing a smallest point, the rest of the order is determined by $R$), and conversely, for every $(A,R,\leq)\in \overrightarrow{\mathcal C_{c}}$ we have that $(A,R)\in\mathcal C_{c}$.

\medskip

It is a well-known fact that every Ramsey class consists of linearly ordered structures~\cite{Kechris2005}. We have seen that after adding linear orders freely, the class of all finite ordered even 3-hypertournaments corresponds to the class of all finite ordered 3-uniform hypergraphs which induce an even number of hyperedges on every quadruple of vertices. These structures are called \emph{two-graphs} and they are one of the reducts of the random graph (one can obtain a two-graph from a graph by putting hyperedges on triples of vertices which induce an even number of edges). Ramsey expansions of two-graphs have been discussed in~\cite{eppatwographs} and the same ideas can be applied here.

Let $\overrightarrow{\mathcal C_e}$ consist of all finite structures $(A,\leq, E, R)$ such that $(A,\leq)$ is a linear order, $(A,E)$ is a graph, $(A,R)$ is a 3-hypertournament and for every $a,b,c\in A$ with $a<b<c$ we have that $(a,b,c)\in R$ if and only if there are an even number of edges (relation $E$) on $\{a,b,c\}$. Otherwise $(a,c,b)\in R$.

\begin{theorem}\label{thm:pos}
The 4-constrained classes of finite 3-hypertournaments with $S\in\{\{\str C_4\}, \{\str C_4,\str H_4\}, \{\str C_4,\str O_4,\str H_4\}\}$ all have a Ramsey expansion in a finite language. More concretely:
\begin{enumerate}
\item\label{pos:cyc} $\overrightarrow{\mathcal C_{c}}$ is Ramsey.
\item\label{pos:even} $\overrightarrow{\mathcal C_e}$ is Ramsey.
\item\label{pos:all} The class of all finite linearly ordered 3-hypertournaments is Ramsey.
\end{enumerate}
\end{theorem}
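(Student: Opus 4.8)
All three parts follow from a single observation: in each case the ternary relation $R$ is \emph{explicitly definable} from the relations being added, so that each of the three classes is in an embedding-preserving bijection with a classical Ramsey class of linearly ordered structures, and since the partition property $\str C\longrightarrow(\str B)^\str A_2$ refers only to embeddings, the Ramsey property transfers across such a bijection. Concretely, the plan is to reduce~(\ref{pos:cyc}) to the class of all finite linear orders --- which is Ramsey by the ordinary finite Ramsey theorem --- and to reduce~(\ref{pos:even}) and~(\ref{pos:all}) to the class of all finite linearly ordered graphs, respectively the class of all finite linearly ordered $3$-uniform hypergraphs, both of which are Ramsey by the Ne\v set\v ril--R\"odl theorem (the class of all finite ordered structures in a fixed finite relational language is Ramsey).

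For~(\ref{pos:cyc}): a member $(A,R,\leq)$ of $\overrightarrow{\mathcal C_c}$ is by definition determined by its linear order, since $R$ is forced to contain every $\leq$-increasing triple and this fixes $R$ on each three-element set; and a map between two members of $\overrightarrow{\mathcal C_c}$ is an embedding exactly when it embeds the underlying linear orders (being $\leq$-preserving it automatically preserves $R$ and its complement). So the forgetful map identifies $\overrightarrow{\mathcal C_c}$ with the class of all finite linear orders, and~(\ref{pos:cyc}) is the finite Ramsey theorem. Part~(\ref{pos:all}) is the same argument carried through the standard translation between linearly ordered $3$-hypertournaments and linearly ordered $3$-uniform hypergraphs: the map sending $(A,R,\leq)$ to $(A,\widehat R,\leq)$, where $\widehat R$ has a hyperedge on $\{a,b,c\}$ with $a<b<c$ exactly when $(a,b,c)\in R$, is a bijection; since no four-point constraint is imposed in this case it is a bijection onto the class of \emph{all} finite linearly ordered $3$-uniform hypergraphs, and it preserves embeddings in both directions, so~(\ref{pos:all}) follows from the Ne\v set\v ril--R\"odl theorem.

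Part~(\ref{pos:even}) needs one small extra verification, and this is the only place where I expect any (still routine) obstacle. Given a finite ordered graph $(A,\leq,E)$ and defining $R$ by the parity rule defining $\overrightarrow{\mathcal C_e}$, one first checks that $(A,R)$ really is an even $3$-hypertournament: inside any four-element set each edge of $E$ lies in exactly two of the four triples, so the number of edge--triple incidences is even, hence an even number of those four triples carry an odd number of edges and therefore an even number of them carry an even number of edges; thus every four-point substructure of $(A,R)$ is $\str C_4$ or $\str H_4$ and never $\str O_4$. Since conversely $R$ is recovered from $(\leq,E)$, the forgetful map identifies $\overrightarrow{\mathcal C_e}$ with the class of all finite ordered graphs, and~(\ref{pos:even}) again follows from the Ne\v set\v ril--R\"odl theorem. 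To confirm that $\overrightarrow{\mathcal C_e}$ is genuinely an \emph{expansion} of the class of even $3$-hypertournaments, and not of a smaller class, one invokes the classical characterisation of two-graphs: a $3$-uniform hypergraph meets every four-element set in an even number of triples if and only if it is obtained from some graph by the parity rule; applied to an arbitrary even $3$-hypertournament together with an arbitrary compatible linear order this supplies the required graph $E$. Finally, that the three expanded classes are strong amalgamation classes is immediate from the bijections above with the classes of finite linear orders, finite ordered graphs, and finite ordered $3$-uniform hypergraphs.
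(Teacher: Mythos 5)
Your proposal is correct, and for parts~(\ref{pos:cyc}) and~(\ref{pos:even}) it is essentially the paper's argument: the paper likewise observes that $R$ is definable from $\leq$ (respectively from $\leq$ and $E$) and invokes Ramsey's theorem, respectively the Ne\v set\v ril--R\"odl theorem for ordered graphs; your extra verifications (the parity count showing the rule never produces $\str O_4$, and the two-graph characterisation showing every even 3-hypertournament arises this way) are left implicit in the paper but are exactly the right things to check. For part~(\ref{pos:all}) you take a genuinely different, and arguably cleaner, route. The paper applies the Ne\v set\v ril--R\"odl theorem in the original language to get an ordered \emph{holey} 3-hypertournament $\str C'$ with $\str C'\longrightarrow(\str B)^{\str A}_2$ and then fills the holes arbitrarily, using that hole-filling neither destroys the relevant copies of $\str A$ and $\str B$ nor affects the restricted colouring. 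You instead push the whole class through the order-dependent bijection $(A,R,\leq)\mapsto(A,\widehat R,\leq)$ onto the class of \emph{all} finite linearly ordered 3-uniform hypergraphs, check that this bijection matches embeddings exactly (which it does, since on each 3-set the relation is a single $\Alt(3)$-orbit determined by the increasing enumeration, and order-preserving maps respect increasing enumerations), and apply Ne\v set\v ril--R\"odl there directly. Your version buys you a completion-free argument in an unconstrained class; the paper's version stays in the language of hypertournaments, which is the setting it needs anyway for the negative result about the $\str H_4$-free case in the next section. Both are complete proofs.
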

We remark that these expansions can be shown to have the so-called \emph{expansion property} with respect to their base classes, which means that they are the optimal Ramsey expansions (see e.g. Definition~3.4 of~\cite{Hubicka2016}).
\begin{proof}
In $\overrightarrow{\mathcal C_{c}}$, $R$ is definable from $\leq$ and we can simply use Ramsey's theorem. Similarly, in $\overrightarrow{\mathcal C_e}$, $R$ is definable from $\leq$ and $E$, hence part~\ref{pos:even} follows from the Ramsey property of the class of all ordered graphs~\cite{Nevsetvril1977b}.

To prove part~\ref{pos:all}, fix a pair of finite ordered 3-hypertournaments $\str A$ and $\str B$ and use the Ne\v set\v ril--R\"odl theorem~\cite{Nevsetvril1977b} to obtain a finite ordered holey 3-hypertournament $\str C'$ such that $\str C' \longrightarrow (\str B)^\str A_2$. The holes in $\str C'$ can then be filled in arbitrarily to obtain a linearly ordered 3-hypertournament $\str C$ such that $\str C \longrightarrow (\str B)^\str A_2$.
\end{proof}

\section{The $\str H_4$-free case}
Let $\str A = (A,R)$ be a holey 3-hypertournament. We say that $\overbar{\str A} = (A, R')$ is a \emph{completion} of $\str A$ if $R\subseteq R'$ and $\overbar{\str A}$ is an $\str H_4$-free 3-hypertournament. Most of the known Ramsey classes can be proved to be Ramsey by a result of Hubi\v cka and Ne\v set\v ril~\cite{Hubicka2016}. In order to apply the result for $\str H_4$-free 3-hypertournaments, one needs a finite bound $c$ such that whenever a holey 3-hypertournament has no completion, then it contains a substructure on at most $c$ vertices with no completion. (Completions defined in~\cite{Hubicka2016} do not directly correspond to completions defined here. However, the definitions are equivalent for structures considered in this paper.) We prove the following.

\begin{theorem}\label{thm:neg}
There are arbitrarily large holey 3-hypertournaments $\str B$ such that $\str B$ has no completion but every proper substructure of $\str B$ has a completion.
\end{theorem}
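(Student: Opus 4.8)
The plan is to reduce the completion problem to a $2$-SAT problem and then to exhibit unbounded minimal unsatisfiable instances. First I would pass to the hypergraph encoding $\hat{\str T}$ described above: after fixing a linear order $\leq$ on the vertex set of the structure under construction, a holey $3$-hypertournament is exactly a partial $2$-colouring of its triples (each $3$-set being a hyperedge of $\hat{\str T}$, a non-hyperedge, or a hole), and a completion is a total $2$-colouring extending it in which, for every $4$-set $\{p<q<r<s\}$, the colour pattern $(x_{\{q,r,s\}},x_{\{p,r,s\}},x_{\{p,q,s\}},x_{\{p,q,r\}})$ is neither of the two alternating patterns $(1,0,1,0)$ and $(0,1,0,1)$ --- these being exactly the two colourings of a $4$-set that yield $\str H_4$. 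A one-line check from the description of $\hat{\str H_4}$ shows that the two copies of $\str H_4$ on a fixed $4$-set disagree on \emph{all four} triples, and I would record two consequences. (i) If a $4$-set has at most one hole, that hole can always be coloured keeping the $4$-set $\str H_4$-free, so no obstruction is local to a single $4$-set. (ii) If a $4$-set has exactly two holes, then once its other two triples are coloured it imposes on the two hole-colours either nothing or a single clause of size two (an implication, a NAND or an OR, depending on which two triples are the holes and how the other two are coloured). Hence, as long as every $4$-set has at most two holes, ``$\str B$ has a completion'' is equivalent to ``the $2$-CNF with one variable per hole and one clause per $4$-set is satisfiable''.

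There are minimal unsatisfiable $2$-CNFs with arbitrarily many variables; a convenient family is the implication cycle $\Phi_n$ with variables $v_1,\dots,v_n$, literals $\ell_i=v_i$ for $i\le n$ and $\ell_i=\neg v_{i-n}$ for $n<i\le 2n$, and clauses $\neg\ell_i\vee\ell_{i+1}$ for $i\in\mathbb Z_{2n}$: these force $\ell_1=\ell_2=\cdots=\neg\ell_1$, yet deleting any one clause breaks the cycle and leaves a satisfiable formula. I would build $\str B=\str B_n$ by realizing $\Phi_n$: fix a vertex set, choose triples $u_1,\dots,u_n$ to be the holes (one per variable), choose for each clause of $\Phi_n$ a $4$-set whose two holes are exactly the two variable-triples in that clause, and colour all remaining triples so that (a) each clause-$4$-set imposes precisely the intended clause, (b) no $4$-set acquires three or four holes, and no $4$-set other than a clause-$4$-set has two holes, (c) every vertex lies in some variable-triple $u_i$, and (d) every $4$-set not among the clause-$4$-sets either contains no hole and is $\str H_4$-free, or contains exactly one hole that it does not constrain. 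Then $\str B_n$ encodes $\Phi_n$, so it has no completion, and it can be made arbitrarily large by increasing $n$.

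For the other direction, having a completion passes to substructures (restrict a completion), so it is enough to delete a single vertex $v$. Deleting $v$ removes every triple and every $4$-set through $v$, in particular --- by (c) --- some variable-triple $u_i$, hence every clause of $\Phi_n$ containing $v_i$; since $\Phi_n$ minus any nonempty set of clauses is satisfiable, the $2$-CNF of $\str B_n\setminus\{v\}$ is satisfiable. Colouring the surviving holes according to a satisfying assignment is a completion: each surviving clause-$4$-set is satisfied by construction, and every other $4$-set is $\str H_4$-free because it has no hole or an unconstrained one, by (d). Thus every proper substructure of $\str B_n$ has a completion.

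The main obstacle is the realization in the second paragraph: one must lay out the triples together with the linear order so that each clause-$4$-set carries exactly the intended size-two clause (the clause types realizable on a $4$-set depend on where its vertices sit in the order), so that no stray $4$-set has three or more holes or two holes not forming a designated clause, and so that the pre-colours of the non-clause triples --- which are shared by all $4$-sets through them, and some of which must be nontrivial in order to activate the clause-$4$-sets --- can be chosen so as not to constrain any other $4$-set. This is a delicate bookkeeping exercise; granting it, both halves of the theorem follow mechanically from the $2$-SAT dictionary of the first paragraph.
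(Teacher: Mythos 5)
Your high-level strategy is in fact the same as the paper's: the paper's gadgets $\str G$, $\str G^\neg$ are exactly your two-hole $4$-sets imposing a single implication between their two hole-colours, and the paper's $\str B_n$ (two implication chains $123\Rightarrow 234\Rightarrow\cdots\Rightarrow\neg 123$ and $\neg 123\Rightarrow\cdots\Rightarrow 123$ glued along the triple $\{1,2,3\}$) is precisely a realization of your implication cycle $\Phi_n$. Your ``$2$-SAT dictionary'' in the first paragraph is also correct: the two copies of $\str H_4$ on a fixed ordered $4$-set are complementary as hyperedge patterns, so a $4$-set with $k$ holes and $4-k$ coloured triples forbids at most one of the $2^k$ colourings of its holes. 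The problem is that everything you defer to the ``delicate bookkeeping exercise'' is the actual content of the theorem, and the specific conditions (a)--(d) under which you propose to do the bookkeeping are not realizable as stated. Condition (b) requires every $4$-set to contain at most two holes; double-counting incidences between $4$-sets and coloured triples shows this forces at least $2\binom{N}{4}/(N-3)=\tbinom{N}{3}/2$ triples to be coloured, i.e.\ at least half of \emph{all} triples. So the ``realization'' is not a matter of placing $O(n)$ gadgets in a sea of holes: you must exhibit an explicit colouring of $\Theta(N^3)$ triples in which all $\Theta(N^4)$ hole-free $4$-sets avoid $\str H_4$, the $\Theta(N)$ one-hole $4$-sets through each variable-triple impose no unit clause, and the activating colours of the clause-$4$-sets (which are shared with many other $4$-sets) do not propagate unintended constraints. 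None of this is verified, and it is not mechanical.

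Moreover, the paper's own construction does not satisfy your condition (b) and could not easily be made to: because consecutive gadgets must overlap (two triples are co-holes of a $4$-set only if they share two vertices), the coloured triples of adjacent gadgets inevitably create stray $4$-sets with two coloured triples and two holes --- e.g.\ $\{1,2,4,5\}$ in $\str O_n$ --- which are not designated clause-$4$-sets. The paper copes by checking that every such stray $4$-set is again isomorphic to $\str G$, $\str G^\neg$ or a complement (part~\ref{on:1} of Lemma~\ref{lem:on}), by leaving almost all triples as holes and completing them with a uniform default orientation, and by invoking strong amalgamation of the $\str H_4$-free class to finish the vertex-deleted completions in part~\ref{on:3}. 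These verifications --- that the full $\str B_n$-minus-a-vertex really completes, and that the implication chain really propagates to force $(1,2,3)$ both in and out of any completion of $\str B_n$ --- are exactly what is missing from your argument. To repair the proposal you should either carry out your construction explicitly and prove (a)--(d) (after weakening (b) and (d) to allow controlled stray clauses and all-hole $4$-sets), or follow the paper and verify the analogue of Lemma~\ref{lem:on} for a concrete family.
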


This theorem implies that one cannot use~\cite{Hubicka2016} directly for $\str H_4$-free hypertournaments. However, a situation like in Theorem~\ref{thm:neg} is not that uncommon. There are two common culprits for this, either the class contains orders (for example, failures of transitivity can be arbitrarily large in a holey version of posets) or it contains equivalences (again, failures of transitivity can be arbitrarily large). In the first case, there is a condition in~\cite{Hubicka2016} which promises the existence of a linear extension, and thus resolves the issue. For equivalences, one has to introduce explicit representatives for equivalence classes (this is called \emph{elimination of imaginaries}) and unbounded obstacles to completion again disappear.

For $\str H_4$-free hypertournaments neither of the two solutions seems to work. This means that something else is happening which needs to be understood in order to obtain a Ramsey expansion of $\str H_4$-free tournaments. Hopefully, this would lead to new, even stronger, general techniques.

In the rest of the abstract we sketch a proof of Theorem~\ref{thm:neg}.

\begin{lemma}\leavevmode
\begin{enumerate}
\item Let $\str G = (G, R)$ be a holey 3-hypertournament with $G=\{1,2,3,4\}$ such that $(1,3,4)\in R$, $(1,4,2)\in R$ and $\{1,2,3\}$ and $\{2,3,4\}$ are holes. Let $(G,R')$ be a completion of $\str G$. If $(1,2,3)\in R'$, then $(2,3,4)\in R'$.
\item Let $\str G^\neg = (G, R)$ be a holey 3-hypertournament with $G=\{1,2,3,4\}$ such that $(2,4,3)\in R$, $(1,4,2)\in R$ and $\{1,2,3\}$ and $\{1,3,4\}$ are holes. Let $(G,R')$ be a completion of $\str G^\neg$. If $(1,2,3)\in R'$, then $(1,3,4)\notin R'$.
\end{enumerate}
\end{lemma}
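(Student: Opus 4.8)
The plan is to analyze both statements by brute-force case checking on the four-element 3-hypertournament types, exploiting the description of $\str H_4$ in terms of ordered hypergraphs given in the excerpt. Recall that a completion of a holey 3-hypertournament must be $\str H_4$-free, i.e., every four-vertex substructure must be isomorphic to $\str C_4$ or $\str O_4$; equivalently, for any linear order on the four vertices, the associated hypergraph $\hat{\str T}$ must \emph{not} have exactly two hyperedges intersecting in a pair $\{a,b\}$ (with $a<b$) that has exactly one vertex strictly between them. So my first step is to fix, in each of the two parts, the linear order $1<2<3<4$ and translate the given relations into hyperedge data: a triple $(x,y,z)\in R$ with $x<y<z$ under $\leq$ means $\{x,y,z\}$ is a hyperedge, while $(x,z,y)\in R$ means it is not.

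For part~(1): the hypotheses $(1,3,4)\in R$ and $(1,4,2)\in R$ translate (rewriting $(1,4,2)$ as the statement about the increasing triple $\{1,2,4\}$, which is \emph{not} a hyperedge) into: $\{1,3,4\}$ is a hyperedge and $\{1,2,4\}$ is not. The two holes $\{1,2,3\}$ and $\{2,3,4\}$ are exactly the two triples whose hyperedge-status is undetermined; a completion chooses both. Assume $(1,2,3)\in R'$, i.e. $\{1,2,3\}$ becomes a hyperedge. Now the four-vertex structure on $\{1,2,3,4\}$ with order $1<2<3<4$ has hyperedge set containing $\{1,2,3\}$ and $\{1,3,4\}$ and not $\{1,2,4\}$. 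If the completion also declared $\{2,3,4\}$ a \emph{non}-hyperedge, the hyperedge set would be exactly $\{\{1,2,3\},\{1,3,4\}\}$, two hyperedges meeting in $\{1,3\}$ with exactly one vertex ($2$) strictly between $1$ and $3$ — this is precisely $\hat{\str H_4}$, contradicting $\str H_4$-freeness. Hence $\{2,3,4\}$ must be a hyperedge, i.e. $(2,3,4)\in R'$. I would present this as a short paragraph, possibly with a sentence checking that the remaining option (both $\{1,2,3\}$ and $\{2,3,4\}$ hyperedges, giving three hyperedges) is $\str O_4$ and hence allowed.

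For part~(2): translate $(2,4,3)\in R$ as: the increasing triple $\{2,3,4\}$ is \emph{not} a hyperedge; and $(1,4,2)\in R$ as: $\{1,2,4\}$ is \emph{not} a hyperedge. The holes are $\{1,2,3\}$ and $\{1,3,4\}$. Assume $(1,2,3)\in R'$, so $\{1,2,3\}$ is a hyperedge; we must show $(1,3,4)\notin R'$, i.e. $\{1,3,4\}$ is not a hyperedge. Suppose for contradiction it were; then the hyperedge set on $\{1,2,3,4\}$ in the order $1<2<3<4$ is exactly $\{\{1,2,3\},\{1,3,4\}\}$ (the other two triples are non-hyperedges by hypothesis), which again is $\hat{\str H_4}$ — two hyperedges sharing $\{1,3\}$ with one vertex between — contradiction. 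Hence $\{1,3,4\}$ is not a hyperedge, as claimed.

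The only mild subtlety — and the step I'd be most careful about — is getting the translation between the cyclic ordering of $R$-tuples and the increasing-triple hyperedge convention exactly right (remembering that $\Aut$ of a triple is $\Alt$, so $(x,y,z)\in R \iff (y,z,x)\in R \iff (z,x,y)\in R$, while the odd permutations flip membership), and double-checking that the configurations I call $\hat{\str H_4}$ really satisfy the ``exactly one vertex strictly between'' clause from the definition of $\str H_4$ rather than the other intersection pattern. Once those bookkeeping points are pinned down, both parts reduce to the single observation that a pair of hyperedges on four ordered points sharing a ``gap-one'' pair, with the other two triples empty, is forbidden in any $\str H_4$-free completion. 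No amalgamation or Ramsey machinery is needed here; this lemma is purely the local obstruction that will be iterated to build the arbitrarily large examples in Theorem~\ref{thm:neg}.
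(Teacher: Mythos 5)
Your proposal is correct and is essentially the paper's own argument in expanded form: the paper's proof is the one-line observation that choosing the forbidden orientation on the remaining hole makes $(G,R')$ isomorphic to $\str H_4$, and your ordered-hypergraph translation (two hyperedges $\{1,2,3\}$ and $\{1,3,4\}$ meeting in the gap-one pair $\{1,3\}$, the other two triples non-edges) is exactly the verification of that isomorphism using the paper's own encoding. Your bookkeeping of which increasing triples are hyperedges is accurate in both parts, so no gap.
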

\begin{proof}
In the first case, suppose that $(1,2,3)\in R'$. If $(2,4,3)\in R'$, then $(G,R')$ is isomorphic to $\str H_4$. Hence $(2,3,4)\in R'$. The second case is proved similarly.
\end{proof}
Suppose that $\str A=(A,R)$ is a holey 3-hypertournament. For $x,y,z,w\in A$, we will write $xyz\Rightarrow yzw$ if the map $(1,2,3,4)\mapsto(x,y,z,w)$ is an embedding $\str G\to \str A$ and we will write $xyz\Rightarrow \neg xzw$ if the map $(1,2,3,4)\mapsto(x,y,z,w)$ is an embedding $\str G^\neg\to\str A$. Using the complement of $\str G$, we can define $\neg xyz\Rightarrow \neg yzw$, and using the complement of $\str G^\neg$ we can define $\neg xyz\Rightarrow xzw$. This notation can be chained as well, e.g. $xyz\Rightarrow yzw \Rightarrow zwu \Rightarrow \neg zuv$ means that all of $xyz\Rightarrow yzw$, $yzw \Rightarrow zwu$, $zwu \Rightarrow \neg zuv$ are satisfied.

Let $n\geq 6$. We denote by $\str O_n = (O_n, R)$ the holey 3-hypertournament with vertex set $O_n=\{1,\ldots,n\}$ such that
$$123\Rightarrow 234\Rightarrow 345  \Rightarrow \cdots \Rightarrow (n-2)(n-1)n \Rightarrow \neg(n-2)n1\Rightarrow \neg n12 \Rightarrow \neg 123.$$
All triples not covered by these conditions are holes.

\begin{lemma}\label{lem:on}\leavevmode
\begin{enumerate}
\item\label{on:1} There is a completion $(O_n, R')$ of $\str O_n$.
\item\label{on:2} If $(O_n,R')$ is a completion of $\str O_n$, then $(1,2,3) \notin R'$.
\item\label{on:3} For every $v\in O_n\setminus\{1,2,3\}$ there is a completion $(O_n\setminus\{v\},R')$ of the structure induced by $\str O_n$ on $O_n\setminus\{v\}$ such that $(1,2,3)\in R'$.
\end{enumerate}
\end{lemma}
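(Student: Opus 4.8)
The plan is to analyze how the implication chain defining $\str O_n$ propagates truth values through any completion, and to track the parity of a ``flip count'' along the cycle. First I would set up notation: fix a completion $(O_n, R')$. The chain is a sequence of implications of two types, governed by the lemma preceding the definition. A forward step $xyz \Rightarrow yzw$ (an embedding of $\str G$) forces: if $xyz \in R'$ then $yzw \in R'$; its negated form $\neg xyz \Rightarrow \neg yzw$ forces: if $xyz \notin R'$ then $yzw \notin R'$. A step of the $\str G^\neg$ type, $xyz \Rightarrow \neg xzw$, forces: if $xyz \in R'$ then $xzw \notin R'$ (and the complementary $\neg xyz \Rightarrow xzw$ forces the reverse). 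So the key bookkeeping is: each of the first $n-4$ steps ``transports'' a truth value unchanged from one triple to the next, while each of the last three steps (the $\neg$-steps at the end of the cycle) flips the truth value. I would make this precise by defining a truth value $t_i \in \{0,1\}$ for the $i$-th triple in the cyclic list $123, 234, \dots, (n-2)(n-1)n, (n-2)n1, n12$, and then $123$ again closing the loop.

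For part~\eqref{on:1}, I would exhibit an explicit completion. The natural candidate is to start by declaring $(1,2,3) \notin R'$, propagate this forward along the chain (so $234, 345, \dots$ all get value $0$ up through $(n-2)(n-1)n$, which is fine since forward steps only constrain the positive-to-positive and negative-to-negative directions, and we are in the negative branch throughout), then the three $\neg$-steps flip: $\neg(n-2)(n-1)n \Rightarrow$ the next triple, etc. One needs to check consistency at the wrap-around, i.e.\ that the value forced back onto $(1,2,3)$ agrees with the chosen $0$; since there are exactly three flips, an even-vs-odd count shows whether this is consistent — here I expect the point of the construction is that with $(1,2,3) \notin R'$ the chain is consistent, whereas with $(1,2,3) \in R'$ it is not. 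After fixing truth values on all the triples appearing in the chain, all remaining triples are holes and can be completed arbitrarily to a 3-hypertournament on each $4$-set; one must verify no $\str H_4$ is created, but the lemma was precisely designed so that the forced values avoid the $\str H_4$ configuration on the relevant quadruples, and the freely-completed holes can be handled by a general ``free completion of holes'' argument as in the proof of Theorem~\ref{thm:pos}\eqref{pos:all} — or more carefully, by noting that the constraints only ever pin down two triples on any given quadruple and one checks directly that both can be extended to a $\{\str C_4,\str O_4\}$-structure.

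For part~\eqref{on:2}, I would run the same propagation argument but now as a rigidity statement: in \emph{any} completion, the chain of implications forces the truth value of $(1,2,3)$ to propagate all the way around the cycle and come back to itself. Tracing: $t(123)$ propagates unchanged to $t((n-2)(n-1)n)$, then three $\neg$-steps flip it three times (odd number), giving $t(123) = \overline{t(123)}$ if we assume $t(123) = 1$; more carefully, the implications only fire in one direction, so I would argue: if $(1,2,3) \in R'$, then chasing $123 \Rightarrow 234 \Rightarrow \cdots \Rightarrow (n-2)(n-1)n$ gives $(n-2)(n-1)n \in R'$, then $(n-2)(n-1)n \Rightarrow \neg (n-2)n1$ gives $(n-2)n1 \notin R'$, then $\neg(n-2)n1 \Rightarrow \neg n12$ gives $n12 \notin R'$, then $\neg n12 \Rightarrow \neg 123$ gives $(1,2,3) \notin R'$, a contradiction. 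Hence $(1,2,3) \notin R'$.

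For part~\eqref{on:3}, given $v \in O_n \setminus \{1,2,3\}$, deleting $v$ breaks the chain: the implications involving $v$ are removed, so the cycle becomes a path from $123$ to itself with a gap, and the obstruction vanishes. I would then build a completion starting from $(1,2,3) \in R'$ and propagating forward as far as the chain allows before hitting the missing vertex $v$, and propagating backward (via the contrapositive-style implications $\neg xyz \Rightarrow xzw$ read in reverse, i.e.\ from the negated end) from the three $\neg$-steps — being slightly careful that $v$ might be one of $n-1, n, \dots$ so that it disrupts one of the final $\neg$-steps rather than a middle forward step; either way one endpoint of the broken chain is now unconstrained, so consistency is automatic. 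Again the leftover holes, including all triples that contained $v$ but now lie in $4$-sets within $O_n \setminus \{v\}$, get completed freely while avoiding $\str H_4$. The main obstacle I anticipate is precisely this last ``free completion avoiding $\str H_4$'' step: unlike in Theorem~\ref{thm:pos}\eqref{pos:all}, the class of $\str H_4$-free 3-hypertournaments is \emph{not} known to have the property that holes can always be filled (indeed Theorem~\ref{thm:neg}, which this lemma serves, asserts the opposite in general), so one cannot invoke a black-box completion result — one must check by hand that the \emph{specific partial structures} arising here, with only the chain-triples pinned down, extend to $\str H_4$-free 3-hypertournaments, using that each quadruple carries at most a controlled pattern of forced triples that the lemma guarantees is $\str H_4$-avoiding.
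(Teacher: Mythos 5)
Your part~\eqref{on:2} matches the paper exactly: the proof there is ``induction on the conditions,'' i.e.\ precisely your careful chase $123\Rightarrow 234\Rightarrow\cdots\Rightarrow(n-2)(n-1)n\Rightarrow\neg(n-2)n1\Rightarrow\neg n12\Rightarrow\neg 123$, which turns the assumption $(1,2,3)\in R'$ into $(1,2,3)\notin R'$. One correction to your bookkeeping, though: there are not three ``flips.'' Only the single $\str G^\neg$-step $(n-2)(n-1)n\Rightarrow\neg(n-2)n1$ reverses the truth value; the two final steps are instances of the \emph{complement of} $\str G$ and transport a negative value to a negative value. The parity happens to come out the same (odd), so your careful version is unaffected, but the ``three flips'' description misreads the construction.

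For parts~\eqref{on:1} and~\eqref{on:3} your strategy (assign truth values to the chain triples consistently, then fill the remaining holes) is the same as the paper's, but there is a genuine gap where you defer the $\str H_4$-avoidance check. You correctly observe that no black-box hole-filling is available for $\str H_4$-free structures --- that is the whole point of Theorem~\ref{thm:neg} --- but your fallback, namely that each quadruple carries at most two pinned triples each of which is \emph{locally} extendable to a $\{\str C_4,\str O_4\}$-structure, does not suffice: the holes are shared between quadruples, so local extendability does not produce a single globally consistent completion. The paper closes exactly this gap with two concrete ingredients you are missing: (i) the structural observation that \emph{every} 4-subset of $O_n$ with at least two covered triples is isomorphic to $\str G$, $\str G^\neg$, or the complement of $\str G$ (this covers quadruples such as $\{1,2,4,5\}$ that inherit relations from two different links of the chain), and (ii) a single uniform filling rule --- for every hole $x<y<z$ put $(x,z,y)$ and its cyclic rotations into $R'$ --- whose correctness then reduces to finitely many 4-vertex cases. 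For part~\eqref{on:3} the paper likewise writes down one explicit assignment of the chain triples on $O_n\setminus\{v\}$ and appeals to strong amalgamation of the $\str H_4$-free class to finish; your description of ``breaking the chain at $v$'' captures the idea but again stops short of exhibiting the assignment and verifying it. As it stands, the proposal proves part~\eqref{on:2} and gives a correct plan, but not a proof, of parts~\eqref{on:1} and~\eqref{on:3}.
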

\begin{proof}
For part~\ref{on:1}, observe that every set of four vertices of $\str O_n$ with at least two different subsets of three vertices covered by a relation is isomorphic to $\str G$, $\str G^\neg$ or the complement of $\str G$. It follows that whenever $x,y,z\in O_n$ is a hole such that $x<y<z$, we can put $(x,z,y)$ and its cyclic rotations in $R'$ to get a completion. Part~\ref{on:2} follows by induction on the conditions.

For part~\ref{on:3}, we put $(1,2,3), (2,3,4), \ldots (v-3,v-2,v-1)\in R'$, $(v+1,v+3,v+2), \ldots, (n-2,n,n-1)\in R'$ and $(n-2,n,1), (n,1,2) \in R'$. It can be verified that this does not create any copies of $\str H_4$. A completion of $(O_n, R')$ exists as the class of all finite $\str H_4$-free tournaments has strong amalgamation.
\end{proof}

Similarly, for $n\geq 6$, we define $\str O_n^\neg = (O_n^\neg, R)$ the holey 3-hypertournament with vertex set $O_n^\neg=\{1,\ldots,n\}$ such that
$$\neg 123\Rightarrow \neg 234\Rightarrow \neg 345 \Rightarrow \cdots \Rightarrow \neg (n-2)(n-1)n \Rightarrow (n-2)n1\Rightarrow n12 \Rightarrow 123$$
and there are no other relations in $R$. In any completion $(O_n^\neg, R')$ of $\str O_n^\neg$ it holds that $(1,2,3)\in R'$, in fact, an analogue of Lemma~\ref{lem:on} holds for $\str O_n^\neg$.

\medskip

Let $\str B_n$ be the holey 3-hypertournament obtained by gluing a copy of $\str O_n$ with a copy of $\str O_n^\neg$, identifying vertices $1$, $2$ and $3$. (This means that $\str B_n$ has $2n-3$ vertices.) We now use $\{\str B_n : n\geq 6\}$ to prove Theorem~\ref{thm:neg}.

\begin{proof}[of Theorem~\ref{thm:neg}]
Assume that $(B_n, R')$ is a completion of $\str B_n$. So in particular, it is a completion of the copies of $\str O_n$ and $\str O_n^\neg$. By Lemma~\ref{lem:on} and its analogue for $\str O_n^\neg$, we have that $(1,2,3)\notin R'$ and $(1,2,3)\in R'$, a contradiction. 

Pick $v\in B_n$ and consider the structure $\str B_n^v$ induced by $\str B_n$ on $B_n\setminus\{v\}$. We prove that $\str B_n^v$ has a completion. If $v\notin\{1,2,3\}$, one can use part~\ref{on:3} of Lemma~\ref{lem:on} and its analogue for $\str O_n^\neg$ to complete the copy of $\str O_n$ and $\str O_n^\neg$ (one of them missing a vertex) so that they agree on $\{1,2,3\}$. Using strong amalgamation, we get a completion of $\str B_n^v$. If $v\in\{1, 2, 3\}$, we pick an arbitrary completion of $\str O_n$ and $\str O_n^\neg$, remove $v$ from both of them, and let the completion of $\str B_n$ to be the strong amalgamation of the completions over $\{1,2,3\}\setminus v$.
\end{proof}

The following question remains open.
\begin{question}
What is the optimal Ramsey expansion for the class of all finite $\str H_4$-free hypertournaments? Does it have a Ramsey expansion in a finite language?
\end{question}

%
% ---- Bibliography ----
%

\end{document}